\newtheorem{theorem}{Theorem}
\numberwithin{theorem}{section}
\newtheorem{prop}{Proposition}
\numberwithin{prop}{section}
\newtheorem{lemma}{Lemma}
\numberwithin{lemma}{section}
\newtheorem{corollary}{Corollary}
\numberwithin{corollary}{section}
\newtheorem{definition}{Definition}
\numberwithin{definition}{section}
\newtheorem{conj}{Conjecture}
\numberwithin{conj}{section}
\theoremstyle{remark}
\numberwithin{remark}{section}
\newtheorem{example}{Example}
\numberwithin{example}{section}
\newtheorem*{acknow}{Acknowledgement}
\numberwithin{equation}{section}
\newcommand{\real}{\mathbb{R}}
\newcommand{\complex}{\mathbb{C}}
\newcommand{\nat}{\mathbb{N}}
\newcommand{\integ}{\mathbb{Z}}
\renewcommand{\Re}{\operatorname{Re}}
\renewcommand{\Im}{\operatorname{Im}}
\renewcommand{\sp}{\operatorname{sp}}
\begin{document}
\title{On reality of eigenvalues of banded block Toeplitz matrices}

\author[Dario Giandinoto]{Dario Giandinoto}
\address{Department of Mathematics, Stockholm University, SE-106 91 Stockholm,
      Sweden}
\email{dario.giandinoto@math.su.se}

\date{\today}

\keywords{Toeplitz and $k$-Toeplitz matrices, asymptotic of eigenvalues, reality of spectrum}
\subjclass[2020]{Primary 15B05  Secondary 47B35,  47B28}

\begin{abstract} We formulate and partially prove a general conjecture providing necessary and sufficient conditions for the reality of the asymptotic spectrum of an arbitrary real banded block Toeplitz matrix. Additionally we present numerical experiments supporting it. This conjecture is a direct generalization of the already existing one in the case of banded Toeplitz matrices.
\end{abstract}

\maketitle

\section{Introduction} \label{sec:intro} 

The motivation for this study is twofold. Firstly,  in \cite{ShSt} B. Shapiro and F. \v{S}tampach considered the question when every  principal submatrix of a real banded infinite Toeplitz matrix  $T$ has real spectrum. The main claim of this paper is that the necessary and sufficient condition for the latter reality of the sequence of spectra is the existence of a Jordan curve in the pull-back $b^{-1}(\real)$ which contains the origin in its interior, see Theorem 1 of \cite{ShSt}.  Here $b(z)$ is the symbol of the banded Toeplitz matrix $T$. (The definition of a symbol is given in \eqref{eq:symbol}). 

\medskip
Unfortunately, a gap was later found in the proof of the latter result, see \cite{ShSt2} and currently this statement is only a conjecture supported by a large numerical evidence  and  proven in one direction.  Extending the set-up of \cite{ShSt} from the Toeplitz to the block Toeplitz scenario is our first motivation.  Notice that different aspects of block Toeplitz matrices  have been extensively  studied since the pioneering papers \cite{Wi1, Wi2}. 

 \medskip
 The second motivation has its origin in physics. Different   periodic $1$-dimensional  structures appearing in natural sciences are typically presented  by   infinite banded matrices whose entries are invariant under an appropriate diagonal shift (called the \emph{sublattice step} in the relevant physics literature). The simplest example of this situation occurs when the structures are periodic with step  $k=1$ in which case one deals with the usual banded Toeplitz matrices. For the sublattice step $k>1$ one encounters block Toeplitz matrices with Toeplitz blocks of size  $k\times k$ (also called $k$-Toeplitz matrices). 


\medskip
Recent years witness the explosion of the interest in non-Hermitian physics (which deals with open system when the Hamiltonian doesn't have to be represented by Hermitian matrices), see e.g. \cite{ABL,  ABdBLT}. Spectral properties of operators for non-Hermitian situation are very different from that of the conventional physics. Especially, non-Hermitian systems having real spectrum are of fundamental  interest both theoretically and for practical applications, see \cite{ABL, ABdBLT, CZ, BPP, KS, YLFWZ}. Therefore finding conditions for reality of spectrum for block Toeplitz matrices is currently a very relevant topic.


\medskip 
Our set-up is as follows. 
Let us consider a sequence of $k \times k$ matrices $(A_m)_{m \in \integ}$. It can be encoded in a matrix-valued function
\[ B(z) = \sum_{m=-\infty}^{\infty} A_m z^m. \]
To such a function $B: \complex \rightarrow \complex^{k \times k}$ and for $n \in \nat$ we associate the block Toeplitz matrix
\[ T_n(B) = (A_{i-j})_{i,j=1}^n ~ \in \complex^{nk \times nk}. \]
We say that $B$ is the \emph{symbol} of $T_n(B)$. Explicitly
\[ T_n(B) = \begin{pmatrix}
A_0 & A_{-1} & \cdots & A_{-n+1} \\
A_1 & A_0 & \ddots & \vdots\\
\vdots & \ddots & \ddots & A_{-1} \\
A_{n-1} & \cdots & A_1 & A_0
\end{pmatrix}.
\]
We will also denote by $T(B)$ the semi-infinite block Toeplitz matrix 
\[ T(B) = (A_{i-j})_{i,j=1}^{\infty}. \]
Since we are interested in the case of banded matrices, we assume $B(z)$ is a matrix-valued Laurent polynomial, i.e. there exist positive integers $s$ and $r$ such that
\begin{equation}\label{eq:blocksymb}
B(z) = \sum_{m= - r}^s A_m z^m.
\end{equation}
(We assume $r,s \geq 1$ since in other cases the spectrum is trivial.) 

\medskip
Define the \emph{scalar symbol} $b: \complex \rightarrow \complex$ of $A$ as given by  
$b(z):=\det B(z)$. 
Since every entry of $B(z)$ is a Laurent polynomial then $b(z)$ is also a Laurent polynomial in $z$. In other words, there exist integers $p,q>0$ satisfyng $p \leq ks$ and $q \leq kr$, such that
\begin{equation}
\label{eq:symbol}
b(z) = \sum_{m=-p}^q b_m z^m, 
\end{equation} 
where we assume that $b_{-p}$ and $b_q$ are non-zero.

Finally,  define the bivariate \emph{characteristic function} $f(z,\lambda):= \det (B(z)-\lambda I_k)$. As above, we can write $f$ as
\[ f(z,\lambda) = \sum_{m=-p}^q f_m(\lambda) z^m, \]
where each $f_m$ is a polynomial in $\lambda$ of degree at most $k$. Generically, as will be proven in Proposition \ref{prop:newton},
\[ \deg f_m(\lambda)= \begin{cases}
\frac{k(q-m)}{q} & \text{if } m \geq 0 \\
\frac{k(p+m)}{p} & \text{if } m \leq 0
\end{cases}.
\]
\par

Given $n \in \nat$, we denote by sp$(T_n(B))$ the spectrum of the matrix $T_n(B)$:
\[ \sp(T_n(B)) = \{ \lambda \in \complex: \det(T_n(B)- \lambda I_{nk})=0 \}. \]
Our main object of study is the \emph{asymptotic spectrum} $\Lambda(B)$ of $T(B)$ defined as  
\[ \Lambda(B)= \left\{ \lambda \in \complex: \lim_{n \rightarrow \infty} \text{dist}(\lambda,\sp(T_n(B)))=0 \right\}. \]

\medskip
Below we study the question of reality of $\Lambda(B)$ for  real banded block Toeplitz matrices $T_n(B)$. Let us first present the important general description of $\Lambda(B)$ given by H.~Widom in \cite{Wi1,Wi2} and later clarified by S.~Delvaux \cite{De}. (These results are similar in spirit to the earlier theorem of Schmidt-Spitzer dealing with the Toeplitz case, see \cite{SnSp} and Chapter 11 of \cite{BoeGr}). 

\medskip
Let $z_1(\lambda), \dots, z_{q+r}(\lambda)$ be the roots of the equation $z^q f(z,\lambda)=0$ with respect to $z$, ordered according to their moduli, i.e. 
\[ 0< \vert z_1(\lambda) \vert \leq \vert z_2(\lambda) \vert \leq \dots \leq \vert z_{q+r}(\lambda) \vert. \]
Define the set 
\[ \Lambda_0(B) := \{\lambda \in \complex: \vert z_q(\lambda) \vert = \vert z_{q+1}(\lambda) \vert \}. \]
Furthermore, we define 
\[ C_0(B,\lambda) := \det \left( \frac{1}{2 \pi i} \int_{\sigma_0} z^{\mu - \nu} (B(z)- \lambda I_k)^{-1} \frac{dz}{z} \right)_{\mu,\nu=1,\dots,r}, \]
where $r$ is given by (\ref{eq:blocksymb}) and $\sigma_0$ is a counterclockwise oriented closed Jordan curve enclosing $z=0$ together with the points $z_j(\lambda)$, $j=1, \dots, q$, but not enclosing other roots of $f(z,\lambda)=0$. Define
\[ G_0(B)=\{ \lambda \in \complex \setminus \Lambda_0(A): C_0(B,\lambda)=0\}. \]
With this notation, we have the following result (Theorem 3.1 in \cite{De}):

\begin{theorem}
\label{thm:delvaux}
Let $B(z): \complex \rightarrow \complex^{k \times k}$ be a matrix-valued symbol such that the following two conditions are satisfied:
\begin{enumerate}[(a)]
\item The set $\Lambda_0(B)$ is a subset of $\complex$ of two-dimensional Lebesgue measure 0;
\item The set $G_0(B)$ has finite cardinality.
\end{enumerate}
Then
\[ \Lambda(B) = \Lambda_0(B) \cup G_0(B). \]
\end{theorem}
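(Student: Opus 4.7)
The plan is to follow the classical Wiener-Hopf / canonical factorization strategy that Widom used in his original work on block Toeplitz matrices. The argument proceeds by first establishing a precise asymptotic expansion for $\det(T_n(B)-\lambda I_{nk})$ on $\complex\setminus\Lambda_0(B)$ and then reading off the two inclusions $\Lambda_0(B)\cup G_0(B)\subseteq\Lambda(B)$ and $\Lambda(B)\subseteq \Lambda_0(B)\cup G_0(B)$ from it.

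For $\lambda\in\complex\setminus\Lambda_0(B)$, the strict inequality $|z_q(\lambda)|<|z_{q+1}(\lambda)|$ allows one to fix a counter-clockwise Jordan curve $\sigma_0$ strictly separating $\{z_1(\lambda),\dots,z_q(\lambda)\}$ from $\{z_{q+1}(\lambda),\dots,z_{q+r}(\lambda)\}$, on which $M_\lambda(z):=B(z)-\lambda I_k$ is non-singular. Standard theory of matrix Laurent polynomials then yields a right canonical Wiener-Hopf factorization $M_\lambda = M_\lambda^{-} D_\lambda M_\lambda^{+}$ with respect to $\sigma_0$, whose partial indices are trivial precisely when a certain determinant of Fourier coefficients of $M_\lambda^{-1}$ is non-zero; a direct residue computation on $\sigma_0$ identifies this determinant with $C_0(B,\lambda)$. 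Combining this factorization with the block analogue of Trench's formula for banded Toeplitz determinants one derives an expansion of the form
\[ \det(T_n(B)-\lambda I_{nk}) = C_0(B,\lambda)\Big(\prod_{j=1}^{q}z_j(\lambda)\Big)^{-n}\bigl(1+o(1)\bigr), \]
uniformly on compact subsets of $\complex\setminus\Lambda_0(B)$, with the $o(1)$ error geometrically controlled by the ratio $|z_q(\lambda)/z_{q+1}(\lambda)|<1$.

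Both inclusions now follow essentially formally from this expansion. If $\lambda_0\in G_0(B)$, then $C_0(B,\lambda_0)=0$ while $C_0(B,\cdot)$ is holomorphic and, by hypothesis (b), not identically zero on a neighbourhood, so Hurwitz's theorem applied to the normalized left-hand side forces eigenvalues of $T_n(B)$ to converge to $\lambda_0$. For $\lambda_0\in\Lambda_0(B)$, one argues by approximation: condition (a) guarantees $\lambda_0$ is a limit of points of $\complex\setminus\Lambda_0(B)$, and upper semi-continuity of the spectrum combined with either a direct quasi-eigenvector construction from the coalescing roots $z_q(\lambda_0)\approx z_{q+1}(\lambda_0)$ or an approximation argument based on the expansion above places $\lambda_0$ in $\Lambda(B)$. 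Conversely, if $\lambda_0\notin\Lambda_0(B)\cup G_0(B)$ then $C_0(B,\lambda_0)\neq 0$ and the uniform asymptotic shows that $|\det(T_n(B)-\lambda I_{nk})|$ is bounded away from zero on a fixed disk around $\lambda_0$, forcing $\mathrm{dist}(\lambda_0,\sp(T_n(B)))\not\to 0$.

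The technical heart of the argument, and the main obstacle, is proving the uniform asymptotic expansion and identifying its leading coefficient with $C_0(B,\lambda)$. This demands a careful residue calculus rewriting the block Toeplitz determinant as a finite-dimensional determinant indexed by the roots $z_j(\lambda)$ lying inside $\sigma_0$, together with delicate control of the subdominant contributions produced by the outer roots; it is here that the hypotheses (a) and (b) are genuinely used, both to exclude pathological breakdown of the factorization on dense subsets and to permit the pointwise Hurwitz-type argument to be globalized.
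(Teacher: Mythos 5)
The paper does not prove Theorem \ref{thm:delvaux}: it is quoted verbatim from Delvaux \cite{De} (Theorem 3.1 there) and used as a black box. So there is no in-paper argument to compare your proposal against; I will instead assess the proposal on its own terms.

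Your outline captures the correct overall Widom--Delvaux strategy: canonical (Wiener--Hopf) factorization of $B(z)-\lambda I_k$ on a contour $\sigma_0$ separating $\{z_1,\dots,z_q\}$ from $\{z_{q+1},\dots,z_{q+r}\}$, identification of the obstruction to trivial partial indices with $C_0(B,\lambda)$ via a residue computation, a Widom/Trench-type determinant expansion, and a Hurwitz argument at zeros of $C_0$. However, there is a genuine gap in the inclusion $\Lambda_0(B)\subseteq\Lambda(B)$, which you do not actually establish. Hypothesis (a) lets you approximate $\lambda_0\in\Lambda_0(B)$ by points of the complement, but that alone does not place $\lambda_0$ in the limit set $\Lambda(B)$: one still has to show that eigenvalues of $T_n(B)$ accumulate arbitrarily close to $\lambda_0$. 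Invoking ``upper semi-continuity of the spectrum'' points the wrong way --- it controls where accumulation points of eigenvalues \emph{can} lie, not where they \emph{must} lie --- and the ``direct quasi-eigenvector construction'' is named but never carried out. This is precisely where the analytic work lives: on $\Lambda_0(B)$ the uniform expansion you write down fails because $|z_q(\lambda)/z_{q+1}(\lambda)|\to 1$, and one must show the resulting competing dominant contributions (from all roots of equal modulus) force $\det(T_n(B)-\lambda I_{nk})$ to vanish near $\lambda_0$ for infinitely many $n$. Schmidt--Spitzer in the scalar case and Widom/Delvaux in the block case handle this with a normal-families/compactness argument and careful control of the oscillating phases; your proposal asserts the conclusion of that step rather than proving it. You should also verify the exact form of the leading term in the expansion (there is generically an extra $n$-th power of a constant built from the extremal Laurent coefficient of $f(\cdot,\lambda)$), since the Hurwitz argument requires knowing exactly which holomorphic factor is converging locally uniformly to $C_0(B,\lambda)$.
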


Delvaux also proves that the hypotheses (a) and (b) of Theorem~\ref{thm:delvaux} are satisfied in the two following cases (Proposition 1.1 in \cite{De}):

\begin{itemize}
\item The set $\complex \setminus \Lambda_0(B)$ is connected and  $\Lambda_0(B)$  has no interior points. 
\item $T(B)$ is a Hessenberg matrix.
\end{itemize}

\medskip

In this paper we explore conditions under which $\Lambda_0(B) \subseteq \real$, with $T(B)$ being a real banded block Toeplitz matrix. Let us formulate a general conjecture concerning the reality of $\Lambda_0(B)$.
 Define the  set:
\[ \Gamma(B)=\{z \in \complex: \exists \lambda \in \real \text{ s.t. } f(z,\lambda)=0\}. \]

Notice how, if $k=1$ and $b(z)$ is a Laurent polynomial, the characteristic function reduces to
\[ f(z,\lambda)= b(z) - \lambda. \]
Then, the curve $\Gamma(b)$ in the scalar case (that is, for a banded Toeplitz matrix), coincides with $b^{-1}(\real)$, which is the set studied in \cite{ShSt}.

\begin{conj}
\label{conj:main} 
Given a real ($k \times k$)-matrix valued symbol $B(z)$, the set $\Lambda_0(B)$ of $T(B)$ belongs to  $\real$ if and only if $\Gamma(B)$ contains $k$ Jordan curves (ovals) having $0$ in their interior.
\end{conj}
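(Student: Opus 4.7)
In view of the one-direction status of the scalar analogue \cite{ShSt}, whose other direction has the gap noted in \cite{ShSt2}, I will attempt only the implication ``if'', namely: if $\Gamma(B)$ contains $k$ Jordan curves $\gamma_1,\ldots,\gamma_k$ enclosing the origin, then $\Lambda_0(B) \subseteq \real$. Throughout, let $\lambda_1(z),\ldots,\lambda_k(z)$ be the eigenvalue branches of $B(z)$, multi-valued analytic on $\complex\setminus\{0\}$, so that set-theoretically $\Gamma(B) = \bigcup_{i=1}^{k} \lambda_i^{-1}(\real)$. The reality of $B$ ensures that both $\Gamma(B)$ and the oval family are invariant under $z \mapsto \bar z$; in particular each $\gamma_j$ is either self-conjugate or paired with a conjugate oval.

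The strategy mirrors the argument of \cite{ShSt} but carried out simultaneously on all $k$ ovals. First, on each $\gamma_j$ one selects the branch $\lambda_{i(j)}$ that is real along $\gamma_j$; compactness of $\gamma_j$ and real-analyticity away from branch points yield that $I_j := \lambda_{i(j)}(\gamma_j) \subset \real$ is a compact interval. The union $\bigcup_j I_j$ generically contains a connected real subinterval, because every $\gamma_j$ surrounds $0$, forcing the $I_j$ to mutually overlap by continuity. For each real $\lambda$ inside this common subinterval, every oval contributes a root of $f(z,\lambda) = 0$ in $z$, so that $f(\cdot,\lambda)$ has at least $k$ roots on $\gamma_1 \cup\ldots\cup \gamma_k$. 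Applying the argument principle to the polynomial $z^p f(z,\lambda)$ along small annular deformations of the ovals identifies these $k$ roots as exactly those separating the inner $p$ and outer $q$ roots in the Schmidt--Spitzer/Widom ordering; combined with the reality of $\lambda$ (which closes the root set under conjugation), this yields $|z_q(\lambda)| = |z_{q+1}(\lambda)|$. Hence a real interval is contained in $\Lambda_0(B)$. To promote this to the full containment $\Lambda_0(B)\subseteq\real$, one argues by contradiction: a non-real $\lambda_0 \in \Lambda_0(B)$ would, via the $B$-reality-inherited symmetry $z_j(\bar\lambda) = \overline{z_j(\lambda)}$, produce an extra equal-modulus pair on the oval system that is incompatible with the Rouch\'e count above.

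The main obstacle is this Rouch\'e/counting step. Even at $k=1$, keeping the modulus-ordering of the roots consistent across branch points of $f$ is exactly where \cite{ShSt} was found deficient. For $k\geq 2$ there is the further complication that monodromy of the eigenvalue branches around $0$ may cyclically permute the $\lambda_i$'s, so the labeling $j\mapsto i(j)$ of ``the real branch on $\gamma_j$'' is only locally well-defined; globally one should probably lift the ovals to the Riemann surface of $f(z,\lambda)=0$ and perform the counting there. Making this precise, and simultaneously handling the degenerate configurations in which the $\gamma_j$ touch or self-intersect or where $\Gamma(B)$ carries further components surrounding $0$, is where I expect the bulk of the technical work to lie.
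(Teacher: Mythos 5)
You have chosen to attempt the \emph{sufficiency} direction (ovals $\Rightarrow \Lambda_0(B)\subset\real$), but this is precisely the direction that the paper does \emph{not} prove. The paper's contribution, Theorem~\ref{thm:main}, is the converse (\emph{necessity}): $\Lambda_0(B)\subset\real$ implies $\Gamma(B)$ contains $k$ pairwise almost disjoint Jordan curves enclosing the origin; the end of Section~\ref{sec:mres} states explicitly that the sufficiency direction remains open even for $k=1$. The paper's argument for necessity is also structurally different from your sketch: it introduces the sets $\mathcal{N}_j=\{z:|z_q(\lambda_j(z))|=|z_{q+1}(\lambda_j(z))|\}$, uses Lemma~\ref{lmm:infty} together with an asymptotics result from \cite{De} (that $z_1,\dots,z_q\to 0$ and $z_{q+1},\dots,z_{q+p}\to\infty$ as $|\lambda|\to\infty$) to show that each $\mathcal{N}_j$ separates $0$ from $\infty$, and then observes that $\lambda_j(\mathcal{N}_j)\subset\Lambda_0(B)\subset\real$ forces $\mathcal{N}_j\subset\lambda_j^{-1}(\real)\subset\Gamma(B)$. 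No Rouch\'e counting, argument principle, or explicit handling of branch points is needed.

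For the direction you do attempt, your sketch restates the known gap rather than closing it. Specifically: (i) the Rouch\'e/argument-principle step meant to certify that the $k$ roots found on $\gamma_1\cup\cdots\cup\gamma_k$ are exactly the $q$-th and $(q{+}1)$-st separators in the Widom ordering presupposes a globally consistent modulus-ordering of the $z_i(\lambda)$ as $\lambda$ varies, and this is precisely what breaks down at collisions of roots (branch points of $f(\cdot,\lambda)$) — the same failure recorded in \cite{ShSt2}; (ii) the claimed contradiction from a hypothetical non-real $\lambda_0\in\Lambda_0(B)$ is never actually drawn — the conjugation symmetry $z_i(\bar\lambda)=\overline{z_i(\lambda)}$ by itself produces no conflict with a root count taken only over real $\lambda$ on real ovals, so passing from ``some real interval lies in $\Lambda_0(B)$'' to $\Lambda_0(B)\subseteq\real$ is unjustified; and (iii) as you yourself note, the monodromy of the eigenvalue branches around $0$ makes the assignment $j\mapsto i(j)$ only locally defined, and no mechanism to control it globally is offered. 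You have neither reproduced the paper's argument (which proves the opposite implication) nor supplied a new proof of the implication you targeted; the sketch is an honest but incomplete outline of the open half of the conjecture.
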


The main result of this paper is a proof of the necessary condition in Conjecture \ref{conj:main}. It is essentially an extension of Theorem 5 of \cite{ShSt} to the block Toeplitz case.

\begin{theorem}
\label{thm:main}
If $\Lambda_0(B) \subset \real$, then $\Gamma(B)$ contains $k$ pairwise almost disjoint Jordan curves having $0$ in their interior.
\end{theorem}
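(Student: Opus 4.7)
My plan extends Theorem~5 of \cite{ShSt} to the block case. The core idea is to track the set of the $q$ smallest-in-modulus roots of $f(z,\lambda)=0$ as $\lambda$ varies over the upper half-plane, and to extract the $k$ Jordan curves as the boundary components of the swept-out region.

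The hypothesis $\Lambda_0(B)\subset\real$ forces the strict separation $|z_q(\lambda)|<|z_{q+1}(\lambda)|$ for every $\lambda$ in the open upper half-plane $\mathbb{H}_+:=\{\lambda\in\complex:\Im\lambda>0\}$, so the unordered multiset $\{z_1(\lambda),\ldots,z_q(\lambda)\}$ depends holomorphically on $\lambda\in\mathbb{H}_+$; its elementary symmetric functions yield a monic polynomial $P_+(z,\lambda)=\prod_{j=1}^q(z-z_j(\lambda))$ of degree $q$ in $z$ with coefficients analytic in $\lambda\in\mathbb{H}_+$. Consider the open image set
\[
D_+:=\bigcup_{\lambda\in\mathbb{H}_+}\{z_1(\lambda),\ldots,z_q(\lambda)\}\subset\complex\setminus\{0\}.
\]
By the Newton-polygon asymptotics of Proposition~\ref{prop:newton}, the small roots approach $0$ as $|\lambda|\to\infty$, so $D_+$ contains a punctured neighborhood of the origin. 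Any boundary point $z_0\in\partial D_+\setminus\{0\}$ is a limit $z_0=\lim_n z_{j_n}(\lambda_n)$ with $\lambda_n\in\mathbb{H}_+$ and $\lambda_n\to\lambda_0\in\real$ (else $z_0\in D_+$), whence $f(z_0,\lambda_0)=0$ with $\lambda_0\in\real$ gives $\partial D_+\setminus\{0\}\subset\Gamma(B)$. The analogous construction on the lower half-plane produces $D_-$, equal to the complex conjugate of $D_+$ by the reality of $B$.

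The decomposition into $k$ Jordan curves comes from the Puiseux analysis of $f$ at $\lambda=\infty$: the small roots partition into $k$ asymptotic groups indexed by the eigenvalue branches of $B(z)$ at $z=0$, each cyclically permuted under small loops around $\lambda=\infty$ and forming a closed rosette around the origin. As $\lambda$ sweeps $\mathbb{H}_+$, each group moves through a connected component of $D_+$ whose boundary, combined with the mirror contribution from $D_-$, closes into one Jordan curve in $\Gamma(B)$ enclosing $0$. The main obstacle is the rigorous topological separation of these $k$ groups throughout $\mathbb{H}_+$: two small roots from distinct Puiseux groups may collide at a $z$-discriminant zero of $f$ inside $\mathbb{H}_+$ without violating $\Lambda_0(B)\subset\real$, which could in principle fuse two components of $D_+$ and reduce the Jordan-curve count. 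Preventing such a merger requires a delicate monodromy analysis of the spectral curve $\{f(z,\lambda)=0\}$, likely invoking Schwarz reflection across $\real\setminus\Lambda_0(B)$ afforded by the reality of the coefficients of $B$; this is the block-case counterpart of the sharpest step in the scalar argument of \cite{ShSt}, now complicated by the nontrivial matrix structure of $B(z)$ at $z=0$.
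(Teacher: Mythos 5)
Your proposal takes a genuinely different route from the paper, but it stops short at precisely the step you flag as ``the main obstacle,'' and that obstacle is a real gap, not a cosmetic one. You try to manufacture the $k$ Jordan curves by grouping the $q$ small $z$-roots into $k$ Puiseux clusters at $\lambda=\infty$ and then tracking those clusters as $\lambda$ sweeps $\mathbb{H}_+$; but, as you yourself observe, nothing in the hypothesis $\Lambda_0(B)\subset\real$ prevents two clusters from colliding at a $z$-discriminant zero lying over some $\lambda\in\mathbb{H}_+$, which would merge components of your swept region $D_+$ and could leave you with fewer than $k$ separating curves. You gesture at Schwarz reflection and monodromy, but you do not supply an argument, and it is not clear one exists along this route. (There is also a smaller unexamined point: why the Puiseux grouping of the $q$ small roots at $\lambda=\infty$ produces exactly $k$ clusters in the first place; this requires reading the Newton polygon of Corollary~\ref{cor:triangle} carefully, and the count is not obviously $k$.)

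The paper avoids the monodromy issue entirely by decomposing along a different axis. Instead of viewing $z$-roots as functions of $\lambda$ and grouping them, it views $\lambda$ as a $k$-valued function $\lambda_1(z),\dots,\lambda_k(z)$ over $z$ and, for each branch $j$, defines the barrier set $\mathcal{N}_j=\{z:|z_q(\lambda_j(z))|=|z_{q+1}(\lambda_j(z))|\}$. Lemma~\ref{lmm:infty} (a consequence of the Newton-polygon computation in Proposition~\ref{prop:newton}) shows each $\lambda_j(z)\to\infty$ as $z\to 0$ or $z\to\infty$; combining this with Delvaux's Corollary~5.2 (for $|\lambda|$ large, $z_1,\dots,z_q$ are near $0$ and $z_{q+1},\dots,z_{q+p}$ near $\infty$) and an intermediate-value argument along any path $\gamma$ from $0$ to $\infty$, one finds a $t_0$ with $|\gamma(t_0)|=|z_q(\lambda_j(\gamma(t_0)))|=|z_{q+1}(\lambda_j(\gamma(t_0)))|$. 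Thus each $\mathcal{N}_j$ separates $0$ from $\infty$, and since $\lambda_j(\mathcal{N}_j)\subset\Lambda_0(B)\subset\real$ one gets $\mathcal{N}_j\subset\Gamma(B)$ immediately. This sidesteps the question of whether root clusters stay separated: the $k$ barriers are indexed by the $\lambda$-branches, not by $z$-root groupings, and each barrier is handled by a self-contained intermediate-value argument. If you want to rescue your approach, you would need to prove the cluster-separation statement you identified; the paper's reindexing shows that one need not.
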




\medskip

In Section 2 of this article, we compare the structure of $\Gamma(B)$ with what's already known in the case $k=1$. Then we formulate and prove two results which are necessary to prove Theorem \ref{thm:main}. Finally, in Section 3 we present some numerical experiments which illustrate the main result and explain why we believe that Conjecture \ref{conj:main} holds.

\section{Main result}
\label{sec:mres}

The goal of this section is to prove Theorem \ref{thm:main}. The proof follows a very similar reasoning to the $k=1$ case, but it just needs some additional technical details. \par

\medskip

\begin{figure}[b]
\centering
\includegraphics[width=0.45\textwidth]{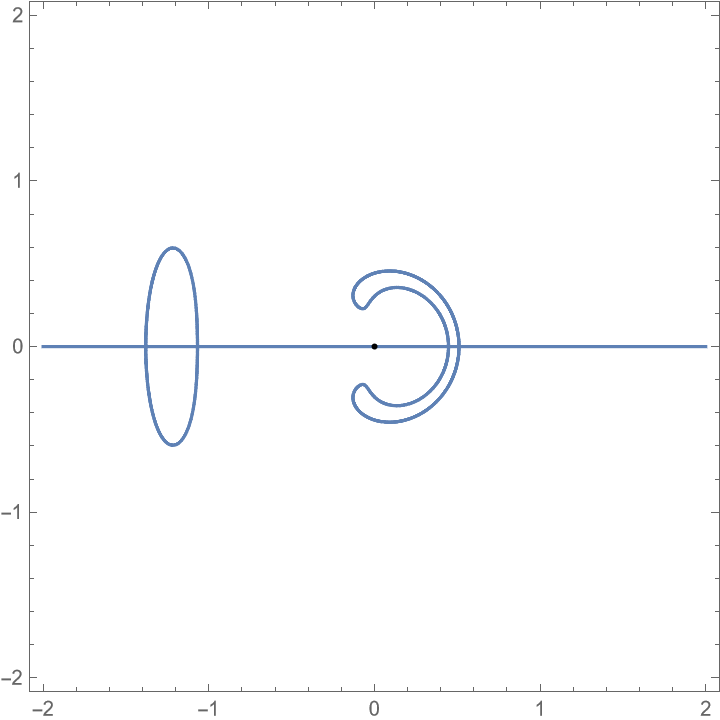}
\caption{Plot of $\Gamma(B)$ with $B$ as in \eqref{eq:gammab}}
\label{fgr:jordan}
\end{figure}

First we want to say some words about the structure of the set $\Gamma(B)$, since it differs from the case $k=1$ in some aspects. If we denote $z=x+iy$ and $\lambda=\alpha+i\beta$ we have that $\Gamma(B)$ is defined by the system of equations
\[ \begin{cases}
\Re f(x+iy,\alpha+i\beta)=0 \\
\Im f(x+iy,\alpha+i\beta)=0 \\
\beta=0
\end{cases} \]
One can eliminate the variable $\alpha$ in the system above, to obtain that there exists a bivariate polynomial $g(x,y)$ such that 
\[ \Gamma(B) = \{z=x+iy \in \complex: g(x,y)=0\}. \]
As we mentioned before, the set $\Gamma(B)$ is a generalization of $n_b = b^{-1}(\real)$, which was studied by Shapiro and $\check{\text{S}}$tampach in \cite{ShSt}. In their article they point out some properties of this set, which they call \emph{net of the rational function $b$}:
\begin{itemize}
\item \emph{No arc of $n_b$ can terminate in $\complex \setminus \{0\}$.} \\
This is true also for our set: as we showed above, the set $\Gamma(B)$ is a real algebraic curve in the variables $x$ and $y$, with $z=x+iy$. It follows that any arc in $\Gamma(B)$ has to be connected with $\infty$, or close into a loop.
\item \emph{Any Jordan curve contained in $n_b \setminus \{0\}$ must have 0 in its interior.} \\
This is no longer true in the case $k>1$. In Figure \ref{fgr:jordan} we plot $\Gamma(B)$ with
\begin{equation}
\label{eq:gammab}
B(z) = \begin{pmatrix}
-\frac{2}{z} + 13 + 3z & -5 -8z \\
-\frac{4}{z}-2 & \frac{1}{z}+5-6z
\end{pmatrix}.
\end{equation}
One can see how the set contains two Jordan curves which do not have $0$ in their interior. This is also a counterexample for the next property.
\item \emph{If $n_b$ contains a Jordan curve entirely located in $\complex \setminus \{0\}$, it's unique.} \\
As one can see from the same example, this no longer holds in our case.
\end{itemize}

\medskip

Now we go on to prove some properties of the characteristic function that will allow us to prove our main result. First we introduce some notation. Let $A$ be a semi-infinite block Toeplitz matrix, with blocks of size $k$. This is equivalent to saying that the entries of $A=(A_{i,j})_{i,j=1}^{\infty}$ have the property 
\begin{align*}
A_{i,j} = A_{i+k,j+k} && \forall i,j \in \nat.
\end{align*}
In other words, the entries on a fixed diagonal of $A$ repeat periodically with period $k$. It follows that the entries of $A$ are determined by $k$ sequences $(a_{n,1})_{n \in \integ}, \dots, (a_{n,k})_{n \in \integ}$, simply by putting $A_{i,j}=a_{i-j,\min(i,j) (\text{mod }k)}$. With this notation, the entries of the $(i-j)$-th diagonal will be $a_{i-j,1},\dots,a_{i-j,k}$ and then they repeat periodically. For instance, if $k=3$, the matrix will look as:
\[ A=
\begin{pmatrix}
a_{0,1} & a_{-1,1} & a_{-2,1} & a_{-3,1} & a_{-4,1} & a_{-5,1} & \cdots \\
a_{1,1} & a_{0,2} & a_{-1,2} & a_{-2,2} & a_{-3,2} & a_{-4,2} & \cdots \\
a_{2,1} & a_{1,2} & a_{0,3} & a_{-1,3} & a_{-2,3} & a_{-3,3} & \cdots \\
a_{3,1} & a_{2,2} & a_{1,3} & a_{0,1} & a_{-1,1} & a_{-2,1} & \cdots \\
a_{4,1} & a_{3,2} & a_{2,3} & a_{1,1} & a_{0,2} & a_{-1,2} & \cdots \\
a_{5,1} & a_{4,2} & a_{3,3} & a_{2,1} & a_{1,2} & a_{0,3} & \cdots \\
\vdots & \vdots & \vdots & \vdots & \vdots & \vdots & \ddots
\end{pmatrix}.
\]

To prove Theorem \ref{thm:main}, we will need a result about the Newton polygon of $f(z,\lambda)$. Recall the following definition:

\begin{definition}
Let 
\[ f(x_1,\dots,x_n) = \sum_{\underline{\alpha} \in \integ^n} c_{\underline{\alpha}} x_1^{\alpha_1} \cdots x_n^{\alpha_n} \]
be a multi-variable (Laurent) polynomial, with $\underline{\alpha}=(\alpha_1,\dots,\alpha_n)$. Then we define the Newton polygon of $f$ to be the convex hull in $\real^n$ of the finite set
\[  \{ \underline{\alpha} \in \integ^n : c_{\underline{\alpha}} \neq 0 \}. \]
\end{definition}

We also recall that for a Laurent polynomial $f(x) = \sum_i c_i x^i$, we call 
\begin{align*}
\operatorname{ord}(f) = \min \{i: c_i \neq 0\} && \text{and} && \deg(f)= \max \{i: c_i \neq 0 \} 
\end{align*}
respectively the \emph{order} and \emph{degree} of $f$. \par
The first result we prove is a computation of the order and degree of the coefficients of the bivariate Laurent polynomial $f(z,\lambda)$.

\begin{prop}
\label{prop:newton}
Let $B(z):\complex \rightarrow \complex^{k \times k}$ be the symbol of a banded block Toepltiz matrix $A=T(B)$. Assume $A_{i,j}=0$ for $i-j<-p$ and $i-j>q$. As we did before, we define $f(z,\lambda):=\det(B(z)-\lambda I_k)$. We can write
\[ f(z,\lambda) = \sum_{\ell=0}^k g_{\ell}(z) \lambda^{\ell}, \]
where each $g_\ell(z)$ is a Laurent polynomial. Then for $\ell= 0, \dots, k$ we have
\begin{align}
\label{eq:ordg}
\operatorname{ord}(g_{\ell})= \left \lceil \frac{-p(k-\ell)}{k} \right \rceil, && \deg(g_{\ell}) = \left \lfloor \frac{q(k-\ell)}{k} \right \rfloor.
\end{align}

\begin{proof}
With the notation introduced above, the matrix $A$ can be determined by $k$ (finite) sequences $(a_{n,1})_{-p \leq n \leq q}, \dots, (a_{n,k})_{-p \leq n \leq q}$. Then we can find explicit expressions for the entries of the symbol $B(z)$ in terms of the $a_{n,k}$'s. For a natural number $m$, all entries of the matrix coefficient of $z^m$ in $B(z)$ will be of the type $a_{km+i-j,\min(i,j) (mod k)}$, with $-k < i-j < k$. But for any non-zero $(a_{n,k})$, we know that $-p \leq n \leq q$. Then we have the two conditions on $m$:
\begin{gather*}
km+i-j \geq -p \Rightarrow m \geq \left \lceil \frac{-p-i+j}{k} \right \rceil, \\
km+i-j \leq q \Rightarrow m \leq \left \lfloor \frac{q-i+j}{k} \right \rfloor.
\end{gather*}
We can now obtain the explicit expression for any entry of $B(z)$. For $1 \leq i,j \leq k$, we have
\begin{equation}
\label{eq:bz}
(B(z))_{i,j} = \sum_{m=\lceil \frac{-p-i+j}{k} \rceil}^{-1} a_{km+i-j,i} z^m + a_{0,\min(i,j)} \mathbbm{1}_{[-p,q]}(i-j) + \sum_{m=1}^{\lfloor \frac{q-i+j}{k} \rfloor} a_{km+i-j,j} z^m .
\end{equation}
It's now clear that the $(i,j)$-th entry of $B(z)$ is a Laurent polynomial of order $\left \lceil \frac{-p-i+j}{k} \right \rceil$ and degree $\left \lfloor \frac{q-i+j}{k} \right \rfloor$. To prove our Lemma, we need to understand the non-zero coefficients of $f(z,\lambda)=\det(B(z)- \lambda I_k)$. To do this, we use the formula for the determinant of a matrix $M=(m_{i,j})_{i,j=1}^k$:
\begin{equation}
\label{eq:det}
\det(M) = \sum_{\sigma \in S_k} \left( \text{sgn}(\sigma) \prod_{i=1}^k m_{i,\sigma(i)} \right),
\end{equation}
where $S_k$ denotes the group of all permutations on $\{1,\dots,k\}$. \par 
Now we want to prove the formula for $\deg(g_{\ell})$ in (\eqref{eq:ordg}). Consider the case $\ell=0$, we have $g_0(z)=\det(B(z))$, so it can be obtained explicitly by combining  \eqref{eq:bz} and \eqref{eq:det}. Then the degree of $\det(B(z))$ is equal to
\[ \max_{\sigma \in S_k} \deg \left( \prod_{i=1}^k z^{\lfloor \frac{q-i+\sigma(i)}{k} \rfloor} \right) = \max_{\sigma \in S_k} \sum_{i=1}^k \left \lfloor \frac{q-i+\sigma(i)}{k} \right \rfloor. \]
Proving (\ref{eq:ordg}) for $\ell=0$ means proving that $\deg(g_0(z))=q$. Then we want to show that
\begin{equation}
\label{eq:sumq}
\max_{\sigma \in S_k} \sum_{i=1}^k \left \lfloor \frac{q-i+\sigma(i)}{k} \right \rfloor = q.
\end{equation}
To do this, first observe that for any $\sigma \in S_k$, we have
\[ \sum_{i=1}^k \left( \frac{q-i+\sigma(i)}{k} \right)= \frac{1}{k} (kq - 1 - \dots - k + \sigma(1) + \dots + \sigma(k))= \frac{kq}{k}=q, \]
hence
\[ \max_{\sigma \in S_k} \sum_{i=1}^k \left \lfloor \frac{q-i+\sigma(i)}{k} \right \rfloor \leq q. \]
Now to prove \eqref{eq:sumq} it suffices to find some $\sigma \in S_n$ such that the sum is equal to $q$ . Since we want that $q-i+\sigma(i) \equiv 0~ (\text{mod }k)$, it suffices to choose $\sigma(i)=i-q~(\text{mod }k)$ for all $i \in \{1, \dots,k \}$, choosing $k$ instead of $0$ as a representative for the class $0$ (mod $k$). Using such a $\sigma$, we obtain
\[ \sum_{i=1}^k \left \lfloor \frac{q-i+\sigma(i)}{k} \right \rfloor = \sum_{i=1}^k \left( \frac{q-i+\sigma(i)}{k} \right) = q, \]
and this proves \eqref{eq:sumq}. \par
Now let us fix $\ell \in \{1, \dots, k\}$, then $g_{\ell}(z)$ is given by the sum of all principal minors of size $k-\ell$ of the matrix $B(z)$. Hence proving (\eqref{eq:ordg}) is equivalent to proving the equality:
\begin{equation}
\label{eq:sumqke}
\max_{\underline{j} \in \mathcal{J}_{\ell}} \max_{\sigma \in S_{k-\ell}^{(\underline{j})}} \sum_{\substack{i=1 \\ i \neq j_1,\dots,j_{\ell}}}^k \left \lfloor \frac{q-i+\sigma(i)}{k} \right \rfloor = \left \lfloor \frac{q(k-\ell)}{k} \right \rfloor. 
\end{equation}
In the equation above
\[ \mathcal{J}_{\ell} = \{ \underline{j}=(j_1,\dots,j_{\ell}) \in \{ 1,\dots,k\}^{\ell}:~j_1 < j_2 < \cdots < j_{\ell} \} \]
and
\[ S_{k-\ell}^{(\underline{j})}= \{\sigma \in S_k : \sigma(j_{\alpha})=j_{\alpha}~~\forall \alpha=1,\dots,\ell \}. \]
As we did before, we compute the sum without the floor signs, fixing any $\underline{j}$ and $\sigma$:
\[ \sum_{\substack{i=1 \\ i \neq j_1,\dots,j_{\ell}}}^k \left(\frac{q-i+\sigma(i)}{k} \right)= 
\sum_{i=1}^k \left(\frac{q-i+\sigma(i)}{k} \right) - \sum_{\alpha=1}^{\ell} \frac{q-j_{\alpha}+\sigma(j_{\alpha})}{k} = q - \frac{\ell q}{k}= \frac{q(k-\ell)}{k}. \]
We deduce then that the sum in \eqref{eq:sumqke} is always less than or equal to $\lfloor \frac{q(k-\ell)}{k} \rfloor$. Hence, it suffices to find a specific $\underline{j}$ and a specific $\sigma$ such that the equality holds. \par 
First let us write $q= ks+r$, with $0 \leq r \leq k-1$. If $r=0$, it suffices to take $\sigma = id$ and any $\underline{j}$ to obtain \eqref{eq:sumqke}, since every term of the sum will be divisible by $k$ and then we can get rid of the floors. Hence, let us assume $1 \leq r \leq k-1$. As we did previously, in the following we choose $k$ instead of $0$ as a representative for the class $0 (\text{mod } k)$. We define $j_{\alpha}= \alpha r~(\text{mod } k)$ for $\alpha=1, \dots, \ell$ and choose $\underline{j}=(j_1,\dots,j_{\ell})$. Now we consider the following permutation:
\[ \sigma(i) = \begin{cases}
j_{\alpha} & \text{if } i=j_{\alpha} \text{ for some } \alpha=1,\dots, \ell \\
k & \text{if } i=(\ell+1)r~(\text{mod } k) \\
i-r~(\text{mod } k) & \text{otherwise}
\end{cases}. \]
Denote $i^* = (\ell+1)r~(\text{mod } k)$. With our choice of $\underline{j}$ and $\sigma$, we have
\begin{multline*}
\sum_{\substack{i=1 \\ i \neq j_1,\dots,j_{\ell}}}^k \left \lfloor \frac{q-i+\sigma(i)}{k} \right \rfloor = \\
\sum_{\substack{i=1 \\ i \neq j_1,\dots,j_{\ell}}}^k \left( \frac{q-i+\sigma(i)}{k} \right)- \frac{q-i^*+\sigma(i^*)}{k}+ \left \lfloor \frac{q-i^*+\sigma(i^*)}{k} \right \rfloor = \\
q - \frac{\ell q}{k} - \left\{ \frac{q-i^*+k}{k} \right\} =  q - \frac{\ell q}{k} - \left\{ \frac{r-i^*}{k} \right\}, 
\end{multline*}
where $\{x\}$ denotes the fractional part of $x$. Now observe that $i^*$ is equal to either $(\ell r) (\text{mod } k) +r$ or $(\ell r) (\text{mod } k) +r - k$. In both cases, we have
\[ q - \frac{\ell q}{k} - \left\{ \frac{r-i^*}{k} \right\} = q - \frac{\ell q}{k} - \left\{ -\frac{\ell r}{k} \right\} = q - \frac{\ell q}{k} - \left\{ q - \frac{\ell q}{k} \right\} = \left \lfloor q - \frac{\ell q}{k} \right \rfloor. \]
We have thus proved that for our choice of $\sigma$ and $\underline{j}$
\[ \sum_{\substack{i=1 \\ i \neq j_1,\dots,j_{\ell}}}^k \left \lfloor \frac{q-i+\sigma(i)}{k} \right \rfloor = \left \lfloor \frac{q(k-\ell)}{k} \right \rfloor, \]
which implies \eqref{eq:sumqke}. \par 
If one does the exact same computations, replacing $q$ with $-p$ and the floor signs with ceiling signs, we will obtain the left-most equality in (\eqref{eq:ordg}). This concludes the proof.
\end{proof}
\end{prop}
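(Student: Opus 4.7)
The plan is to reduce Proposition \ref{prop:newton} to a combinatorial optimization over permutations and subsets of $\{1,\dots,k\}$, bound the target quantity by an arithmetic identity, and then construct an explicit extremizer.

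First I would unpack the entries of $B(z)$. The $k$-periodicity of the diagonals of $T(B)$ together with the band condition $-p \leq i-j \leq q$ forces $(B(z))_{i,j}$ to be a Laurent polynomial of degree at most $\lfloor (q-i+j)/k \rfloor$ and order at least $\lceil (-p-i+j)/k \rceil$, with the extremal coefficients being (generically) nonzero entries of the original infinite matrix. Writing $g_\ell(z)$ as $(-1)^{k-\ell}$ times the sum of all $(k-\ell)\times(k-\ell)$ principal minors of $B(z)$ and expanding each minor by the Leibniz formula, the degree half of the claim reduces to
\[ \max_{\underline{j} \in \mathcal{J}_\ell}\ \max_{\sigma \in S_{k-\ell}^{(\underline{j})}}\ \sum_{i \notin \underline{j}} \left\lfloor \tfrac{q-i+\sigma(i)}{k} \right\rfloor \;=\; \left\lfloor \tfrac{q(k-\ell)}{k} \right\rfloor, \]
where $\mathcal{J}_\ell$ indexes the $\ell$-element subsets of $\{1,\dots,k\}$ and $S_{k-\ell}^{(\underline{j})}$ denotes permutations of the complement.

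For the upper bound I would use the identity $\sum_{i \notin \underline{j}} (q-i+\sigma(i))/k = (k-\ell)q/k$, which holds because the contributions of $-i$ and $\sigma(i)$ cancel on any permuted set. Combining this with $\sum \lfloor x_i \rfloor \leq \sum x_i$ and the fact that the left-hand side of the target equation is an integer immediately yields the upper bound $\lfloor (k-\ell)q/k \rfloor$.

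The main obstacle is matching this upper bound. Writing $q = ks + r$ with $0 \leq r < k$, the case $r = 0$ is trivial: every summand is already an integer and $\sigma = \mathrm{id}$ with any $\underline{j}$ works. When $r \neq 0$, the strategy is to force every summand to be an integer except one controlled term whose fractional part matches that of $(k-\ell)q/k$. Concretely, I would choose $j_\alpha \equiv \alpha r \pmod k$ (with residue $0$ represented by $k$) and define $\sigma$ to fix each $j_\alpha$, send the distinguished index $i^\ast \equiv (\ell+1)r \pmod k$ to $k$, and act as $i \mapsto i - r \pmod k$ on the remaining indices. A direct modular check should confirm that only the $i^\ast$ term is non-integer, with fractional part $\{-\ell r/k\}$, which equals the fractional part of $(k-\ell)q/k$; verifying that this prescription does define a permutation of $\underline{j}^c$ is the delicate bookkeeping step. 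The order statement then follows by the symmetric argument with $q$ replaced by $-p$ and floors replaced by ceilings throughout, requiring no new ideas.
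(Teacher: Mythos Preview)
Your proposal is essentially identical to the paper's own proof: the same reduction to $\max_{\underline{j},\sigma}\sum_{i\notin\underline{j}}\lfloor(q-i+\sigma(i))/k\rfloor$, the same upper bound via $\sum(q-i+\sigma(i))/k=(k-\ell)q/k$, and the very same extremizer $j_\alpha\equiv\alpha r\pmod k$ with $\sigma$ fixing the $j_\alpha$, sending $i^\ast\equiv(\ell+1)r$ to $k$, and shifting by $-r$ elsewhere. Your flag that checking $\sigma$ is a genuine permutation of $\underline{j}^c$ is the delicate step is apt---the paper leaves this implicit as well---and the sign $(-1)^{k-\ell}$ should be $(-1)^\ell$, though this is irrelevant for degree and order.
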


The result we want to prove regarding the Newton polygon of $f(z,\lambda)$ is a direct consequence of Proposition \ref{prop:newton}.

\begin{corollary}
\label{cor:triangle}
Let $A$ be a semi-infinite banded block Toeplitz matrix with blocks of size $k \times k$ and such that $A_{i,j}=0$ for $i-j<-p$ and $i-j>q$ for some $p, q \in \nat$. Then the Newton polygon of its characteristic function $f(z,\lambda)$ is the triangle with vertices $(-p,0)$, $(q,0)$ and $(0,k)$.
\begin{proof}
To a monomial in $f(z,\lambda)$, of the type $ a_{d,e} z^d \lambda^e$, we associate the point $(d,e)$ in $\integ^2$. Then we want to show that the convex hull of the set of non-zero coefficients of $f(z,\lambda)$ is a triangle with vertices $(-p,0)$, $(q,0)$ and $(0,k)$. If we use $(d,e)$ as coordinates on $\integ^2$, this triangle is defined by the inequalities:
\[ \begin{cases}
e \geq 0 \\
k d + q e \leq q k \\
kd - p e \geq -p k
\end{cases}. \]
The first of these inequalities is verified immediately since $f(z,\lambda)=\det(B(z)-\lambda I_k)$. To verify the other two inequalities it is sufficient to notice how Proposition \ref{prop:newton} implies $\frac{-p(k-e)}{k} \leq d \leq \frac{q(k-e)}{k}$. 
\end{proof}
\end{corollary}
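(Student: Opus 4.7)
The plan is to deduce the corollary as a straightforward packaging of Proposition \ref{prop:newton}, doing essentially two things: check that every point of the $f$-support lies in the claimed triangle, and check that the three claimed vertices are actually in the support (so that the convex hull does not shrink).

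First I would fix notation. Write
\[ f(z,\lambda) = \sum_{\ell=0}^k g_\ell(z)\, \lambda^\ell = \sum_{(d,\ell)} c_{d,\ell}\, z^d \lambda^\ell, \]
so that the Newton polygon is the convex hull of $S := \{(d,\ell) : c_{d,\ell} \neq 0\}$. I would then describe the candidate triangle $\Delta$ with vertices $(-p,0),(q,0),(0,k)$ as the intersection of the three half-planes
\[ \ell \geq 0, \qquad kd + q\ell \leq qk, \qquad kd - p\ell \geq -pk, \]
so that proving $\text{conv}(S) = \Delta$ reduces to proving $S \subseteq \Delta$ and that the three vertices are extreme points of $\text{conv}(S)$.

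For the inclusion $S \subseteq \Delta$, the inequality $\ell \geq 0$ is immediate since $f$ is polynomial in $\lambda$ of degree $k$. For the other two, suppose $c_{d,\ell} \neq 0$; then $\operatorname{ord}(g_\ell) \leq d \leq \deg(g_\ell)$, and Proposition \ref{prop:newton} gives
\[ d \leq \left\lfloor \frac{q(k-\ell)}{k} \right\rfloor \leq \frac{q(k-\ell)}{k}, \qquad d \geq \left\lceil \frac{-p(k-\ell)}{k} \right\rceil \geq \frac{-p(k-\ell)}{k}. \]
Multiplying through by $k$ yields exactly $kd + q\ell \leq qk$ and $kd - p\ell \geq -pk$, so $(d,\ell) \in \Delta$.

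For the reverse, I would verify the three vertices are in $S$. The vertex $(0,k)$ appears because the coefficient of $\lambda^k$ in $\det(B(z) - \lambda I_k)$ is the constant $(-1)^k$, so $g_k(z) = (-1)^k \neq 0$. The vertices $(q,0)$ and $(-p,0)$ come from applying Proposition \ref{prop:newton} at $\ell = 0$: it yields $\deg(g_0) = q$ and $\operatorname{ord}(g_0) = -p$, so the monomials $z^q$ and $z^{-p}$ appear in $g_0(z) = \det B(z)$ with nonzero coefficients. Since these three points are the extreme points of $\Delta$ and all of $S$ lies in $\Delta$, the convex hull of $S$ is exactly $\Delta$.

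There is essentially no obstacle here once Proposition \ref{prop:newton} is in hand; that proposition carries all the combinatorial content, and the corollary is a geometric reformulation. The only mildly subtle point is remembering to exhibit the three vertices as actual support points, which is why I would mention the $\lambda^k$ coefficient explicitly rather than merely citing the inclusion $S \subseteq \Delta$.
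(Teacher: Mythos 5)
Your proof is correct and follows essentially the same route as the paper's: reduce the problem to the inequalities defining the triangle, then invoke Proposition~\ref{prop:newton} for the bounds $\lceil -p(k-\ell)/k \rceil \leq d \leq \lfloor q(k-\ell)/k \rfloor$ on the support of each $g_\ell$. In fact you are slightly more complete than the paper: the paper only verifies the inclusion $S \subseteq \Delta$ and tacitly leaves the equality to the reader, whereas you explicitly confirm the three vertices are support points (observing that the $\lambda^k$-coefficient is $(-1)^k$ and using the $\ell = 0$ case of Proposition~\ref{prop:newton} for $(q,0)$ and $(-p,0)$), which is the step that actually pins down the convex hull.
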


\medskip
For a fixed $z$ the equation $f(z,\lambda)=0$ has $k$ solutions with respect to $\lambda$. Let us denote them by $\lambda_j(z)$, $j=1,\dots,k$.
Our statement about the Newton polygon of $f$ allows us to prove a claim regarding the asymptotics of $\lambda_j(z)$:

\begin{lemma}
\label{lmm:infty}
For all $j=1,\dots,k$, the function $z \mapsto \lambda_j(z)$ is continuous and maps a neighbourhood of $0$ and a neighbourhood of $\infty$ onto a neighbourhood of $\infty$.
\begin{proof}
The proof is almost immediate from Corollary \ref{cor:triangle} and Proposition 29 of \cite{AlBrSh}. In fact, the north-eastern edge of the Newton polygon is just a line whose projection on the $x$-axis has length $q$. It follows that all the roots tend to $\infty$ when $z$ tends to $\infty$. By taking $w=z^{-1}$ one can reach the same conclusion for $z$ that tends to $0$.
\end{proof}
\end{lemma}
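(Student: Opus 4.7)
The plan is to combine Corollary \ref{cor:triangle} with standard Puiseux-series/Newton-polygon asymptotics. First, observe that by Proposition \ref{prop:newton} the leading coefficient in $\lambda$ is $g_k(z)=(-1)^k$, a non-zero constant. So for every $z\in\complex^{*}$ the equation $f(z,\lambda)=0$ has exactly $k$ roots (counted with multiplicity), and by the standard continuous dependence of roots of a polynomial on its coefficients these $k$ roots can be organized into functions $\lambda_j(z)$ that are continuous on $\complex^{*}$ away from the discrete branch locus, and, after passing to a branched cover of the $z$-sphere, are globally continuous.

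Next I would apply Corollary \ref{cor:triangle}: the Newton polygon of $f(z,\lambda)$ is the triangle with vertices $(-p,0)$, $(q,0)$ and $(0,k)$. Its north-eastern edge joins $(q,0)$ to $(0,k)$, has slope $-k/q$, and (since the vertex $(0,k)$ carries the monomial $(-1)^k\lambda^k$) its length along the $d$-axis is exactly $q$. By Puiseux theory (precisely Proposition 29 of \cite{AlBrSh}), this edge contributes $k$ Puiseux branches at $z=\infty$, each of the form
\[
\lambda_j(z)=c_j\,z^{q/k}\bigl(1+o(1)\bigr)\qquad\text{as }|z|\to\infty,
\]
where the $c_j$ are the roots of the edge polynomial. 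Since $q\ge 1$, this forces $|\lambda_j(z)|\to\infty$. By the symmetric argument applied to the south-western edge joining $(-p,0)$ to $(0,k)$ (or equivalently after the change of variable $w=z^{-1}$), one gets
\[
\lambda_j(z)=c'_j\,z^{-p/k}\bigl(1+o(1)\bigr)\qquad\text{as }z\to 0,
\]
and $p\ge 1$ again gives $|\lambda_j(z)|\to\infty$.

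Finally, to upgrade these limits from ``the image lies in a neighbourhood of $\infty$'' to ``maps onto a neighbourhood of $\infty$'' I would use holomorphicity and openness. Each $\lambda_j$ is holomorphic on the complement of a discrete branch set, and by the Puiseux expansions above it extends continuously to the points $0$ and $\infty$ with values $\infty$. After unbranching (passing to the local parameter $z^{1/k}$ at $0$ and $\infty$), $\lambda_j$ becomes a non-constant holomorphic map of the Riemann sphere in a neighbourhood of the distinguished point, and so it is open at that point by the open mapping theorem. The image of any neighbourhood of $0$ (resp.\ $\infty$) therefore contains a neighbourhood of $\infty$.

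The main obstacle is really bookkeeping: formally checking that the Puiseux expansions produced by the north-eastern and south-western edges exhaust all $k$ branches and that the open mapping step survives the possible ramification at $z=0$ and $z=\infty$. Both points are handled cleanly by invoking the cited Proposition 29 of \cite{AlBrSh}, which is why the author rightly calls the proof ``almost immediate''.
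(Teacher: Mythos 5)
Your proposal follows the same route as the paper: invoke Corollary~\ref{cor:triangle} together with Proposition~29 of \cite{AlBrSh} to read off the asymptotics of the branches $\lambda_j(z)$ from the north-eastern (respectively south-western) edge of the Newton triangle, handling $z\to 0$ via the substitution $w=z^{-1}$. The additional detail you supply---explicit Puiseux exponents $q/k$ and $-p/k$, and the open-mapping argument justifying the word ``onto''---only makes explicit what the paper's terser proof leaves implicit, and is correct.
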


\medskip 

We are now ready to prove our main result.

\begin{proof}[Proof of Theorem \ref{thm:main}]
Because of the properties of $\Gamma(B)$ (which we mentioned at the beginning of this section), to prove that it contains $k$ Jordan curves with $0$ in their interior it suffices to prove that every path in the Riemann sphere $S^2$ connecting $0$ and $\infty$ intersects $\Gamma(B)$ at least $k$ times. \par 
Define the following sets:
\[ \mathcal{N}_j := \{z \in \complex: \vert z_q(\lambda_j(z)) \vert = \vert z_{q+1}(\lambda_j(z)) \vert \} \]
for $j=1,\dots,k$ (where $z_1(\lambda),\dots,z_{q+p}(\lambda)$ are as previously constructed).
Now we want to show, using Lemma \ref{lmm:infty}, that any path connecting $0$ and $\infty$ has a nonempty intersection with each of the $\mathcal{N}_j$'s. After that we will show that each of the $\mathcal{N}_j$'s is contained in $\Gamma(B)$.  \par
We note that for $\lambda$ large in the modulus, the solutions $z_1(\lambda), \dots, z_q(\lambda)$ will be close to $0$, while $z_{q+1}(\lambda),\dots,z_{q+p}(\lambda)$ will be close to $\infty$. This is proven in Corollary 5.2 of \cite{De}. \par 
Fix $j \in \{1,\dots,k\}$ and let $\gamma:[0,1] \rightarrow S^2$ denote a path in $S^2$ connecting $0$ and $\infty$. By construction, for any $t \in [0,1]$ the value $\vert \gamma(t) \vert$ appears at least once among $\vert z_1(\lambda_j(\gamma(t)) \vert, \dots, \vert z_{p+q}(\lambda_j(\gamma(t)) \vert$. Since $\gamma(0)=0$, for $t$ in a right-neighbourhood of $0$ the value $\vert \gamma(t) \vert$ appears among the first $q$ values $\vert z_1(\lambda_j(\gamma(t)) \vert, \dots, \vert z_q(\lambda_j(\gamma(t)) \vert$. In the same way, for $t$ in a left neighbourhood of $\infty$ the value $\gamma(t)$ will appear among the remaining $p$ values. Since $\gamma$ is continuous, there must exist a $t_0 \in (0,1)$ such that 
\[ \vert \gamma(t_0) \vert = \vert z_q(\lambda_j(\gamma(t_0)) \vert = \vert z_{q+1}(\lambda_j(\gamma(t_0))) \vert \]
and hence $\mathcal{N}_j \cap \gamma([0,1]) \neq \emptyset$. \par 
We have showed that each one of the sets $\mathcal{N}_j$ intersects any path connecting $0$ and $\infty$. Once again, fix $j\in\{1,\dots,k\}$. By definition of $\mathcal{N}_j$ and $\Lambda_0(B)$, we have that $\lambda_j(\mathcal{N}_j) \subset \Lambda_0(B) \subset \real$, hence $\mathcal{N}_j \subset \lambda_j^{-1}(\real)$. Let $z_0 \in \lambda_j^{-1}(\real)$, then we have $f(z_0,\lambda_j(z_0))=0$ with $\lambda_j(z_0) \in \real$; this implies that $z_0 \in \Gamma(B)$. Thus we have that each one of the $\mathcal{N}_j$'s is contained in $\Gamma(B)$. This proves that $\Gamma(B)$ intersects any path connecting $0$ and $\infty$ at least $k$ times, as we wanted.
\end{proof}

We don't have a proof for the other implication of Conjecture \ref{conj:main}, not even for $k=1$ (i.e. the Toeplitz case), but there are various numerical experiments which corroborate the conjecture.

\section{Numerical Experiments}

In this section we present some numerical experiments which illustrate the conclusion of Theorem \ref{thm:main} and show why we think that Conjecture \ref{conj:main} might be true. For very interesting experiments regarding the Toeplitz case (i.e. $k=1$) we refer to \cite{ShSt}. All of our examples regard the case $k>1$. \par 
For the computations and figures of these examples we used the software Wolfram Mathematica.

\begin{figure}[b]
\centering
\begin{minipage}{0.45\textwidth}
\centering
\includegraphics[width=0.9\textwidth]{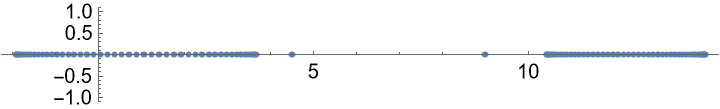}
\caption{Eigenvalues of matrix $T_n(B_1)$ with size $n=100$}
\label{fgr:eigen1}
\end{minipage}
\begin{minipage}{0.5\textwidth}
\centering
\includegraphics[width=0.9\textwidth]{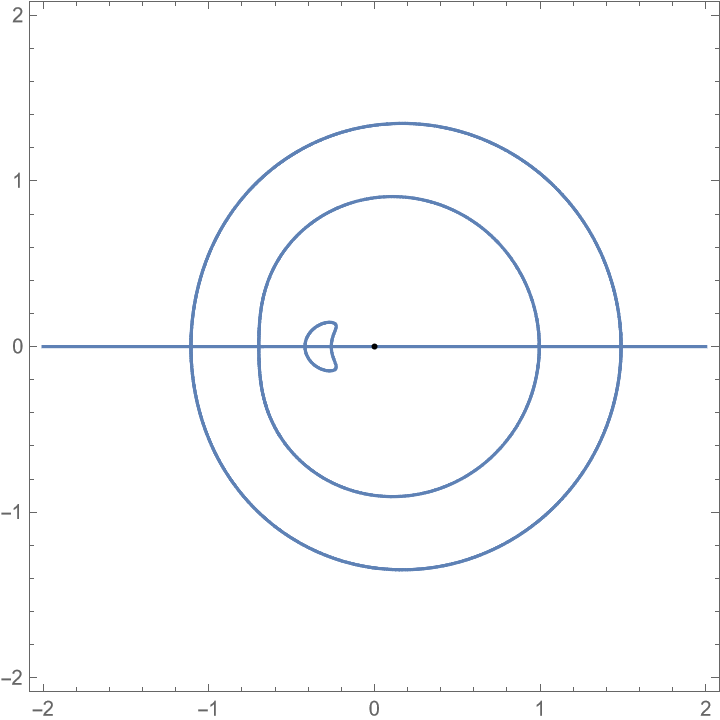}
\caption{Plot of $\Gamma$ for the matrix $A_1$}
\label{fgr:gamma1}
\end{minipage}
\end{figure}

\begin{figure}[b]
\centering
\begin{minipage}{0.45\textwidth}
\centering
\includegraphics[width=0.9\textwidth]{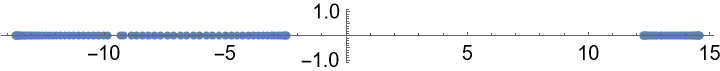}
\caption{Eigenvalues of matrix $T_n(B_2)$ with size $n=100$}
\label{fgr:eigen2}
\end{minipage}
\begin{minipage}{0.5\textwidth}
\centering
\includegraphics[width=0.9\textwidth]{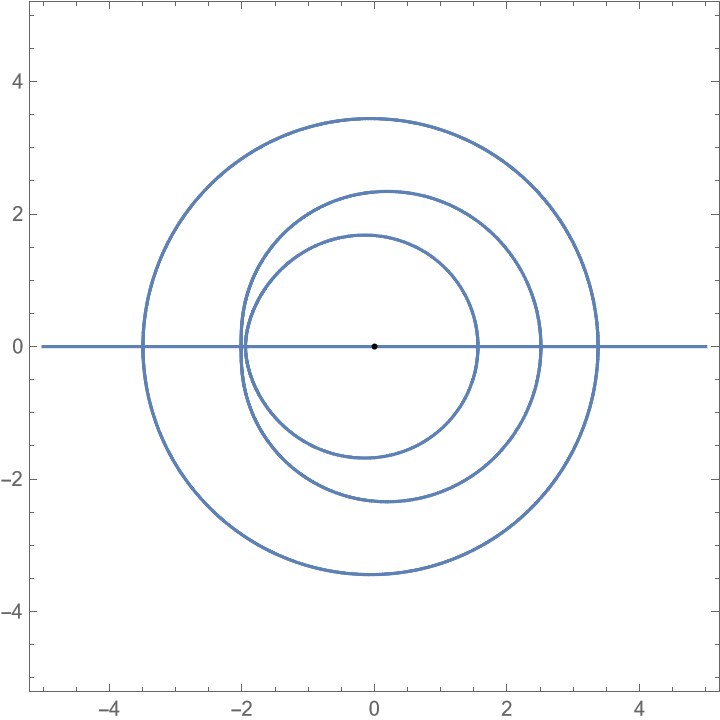}
\caption{Plot of $\Gamma$ for the matrix $A_2$}
\label{fgr:gamma2}
\end{minipage}
\end{figure}

\begin{example}
The first example we give is one of a non-Hermitian matrix with all real eigenvalues. \par 
We consider the following 4-diagonal block Toeplitz matrix with blocks of size 2:
\[ A_1= \begin{pmatrix}
8 & -5 & -2 & & & \\
-2 & 5 & -4 & -1 & & \\
 & -6 & 8 & -5 & -2 & \\
 & & -2 & 5 & -4 & \ddots \\
 & & & \ddots & \ddots & \ddots
\end{pmatrix}. \]
In this case $p=2$, $q=1$ and $k=2$. The symbol of $A_1$ is
\[ B_1(z) = \begin{pmatrix}
-2 z^{-1}+8 & -5-6z \\
-4z^{-1}-2 & z^{-1}+5
\end{pmatrix}. \]
In Figure \ref{fgr:eigen1} we plot the eigenvalues of $T_{100}(B_1)$; one can see that they are all real. To plot the set
\[ \Gamma(B_1) = \{z \in \complex: \exists \lambda \in \real \text{ s.t. } f(z,\lambda)=0 \} \]
we compute 
\[ f(z,\lambda) = 6-\frac{2}{z^2}-\frac{22}{z}-12z-13 \lambda + \frac{\lambda}{z}+\lambda^2. \]
By computing also the real and imaginary part and eliminating one of the variables, we obtain $\Gamma(B)=\{z=x+iy : g(x,y)=0\}$ with
\begin{multline*}
g(x,y) = 72 x^8 y + 288 x^6 y^3 + 432 x^4 y^5 + 288 x^2 y^7 + 72 y^9 - 
 186 x^6 y - 558 x^4 y^3 - 558 x^2 y^5 - \\
 186 y^7 - 60 x^5 y - 
 120 x^3 y^3 - 60 x y^5 + 102 x^4 y + 204 x^2 y^3 + 102 y^5 + 
 62 x^3 y + 62 x y^3 + 9 x^2 y + y^3.
\end{multline*}
Its plot is shown in Figure \ref{fgr:gamma1}. One can observe that three Jordan curves are present, two circling the origin and one not. This is consistent with the result in Theorem \ref{thm:main}. \par 
We also consider the 4-diagonal block Toeplitz matrix $A_2=T(B_2)$ with
\[ B_2(z) = \begin{pmatrix}
8 & 8 & 10-3z \\
9 & -5 & 8 \\
-6 z^{-1} & 4 z^{-1}+3 & -7
\end{pmatrix}. \]
As we did before, we plot sp$(T_{100}(B_2))$ and $\Gamma(B_2)$ in Figure \ref{fgr:eigen2} and \ref{fgr:gamma2}, respectively. One can notice once again how the figures are consistent with our conjecture.
\end{example}

\begin{figure}
\centering
\begin{minipage}{0.45\textwidth}
\centering
\includegraphics[width=0.9\textwidth]{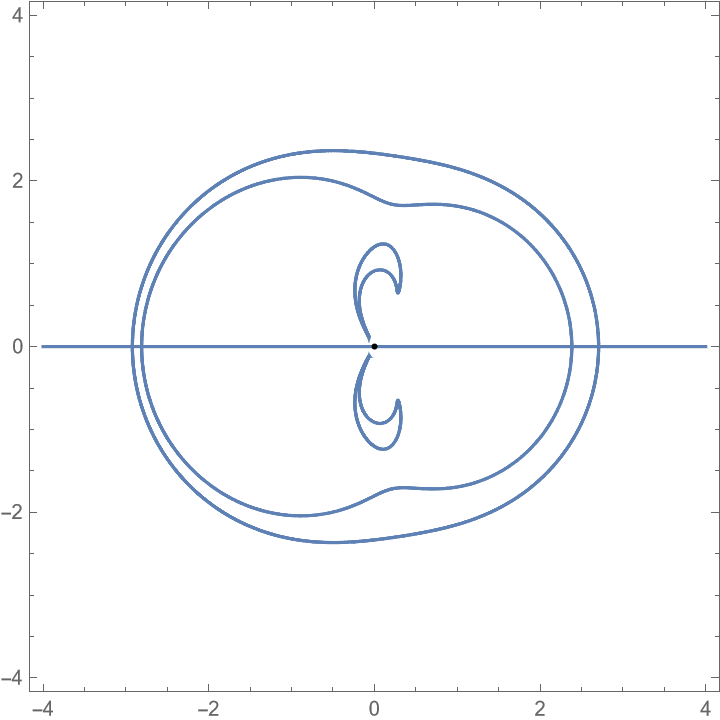}
\caption{$\zeta=70$}
\end{minipage}
\begin{minipage}{0.45\textwidth}
\centering
\includegraphics[width=0.9\textwidth]{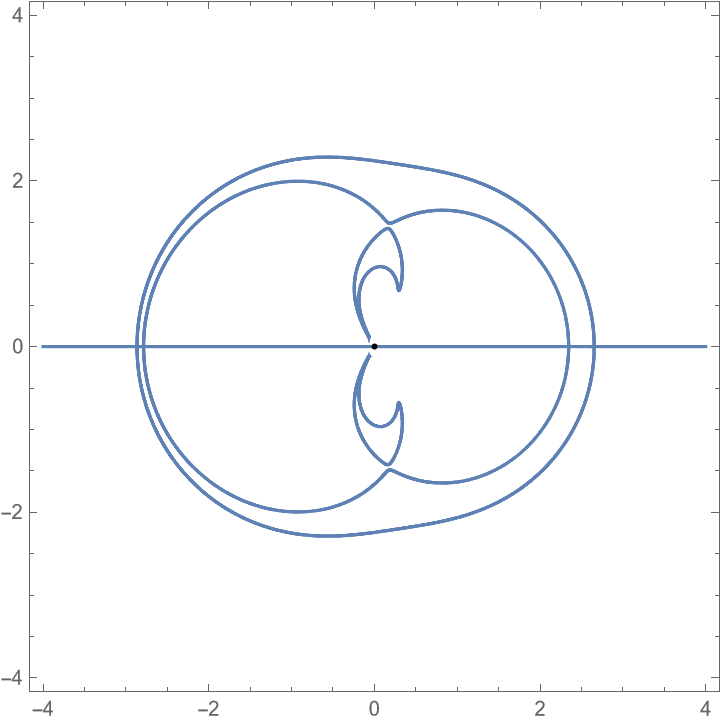}
\caption{$\zeta=47$}
\end{minipage}
\begin{minipage}{0.45\textwidth}
\centering
\includegraphics[width=0.9\textwidth]{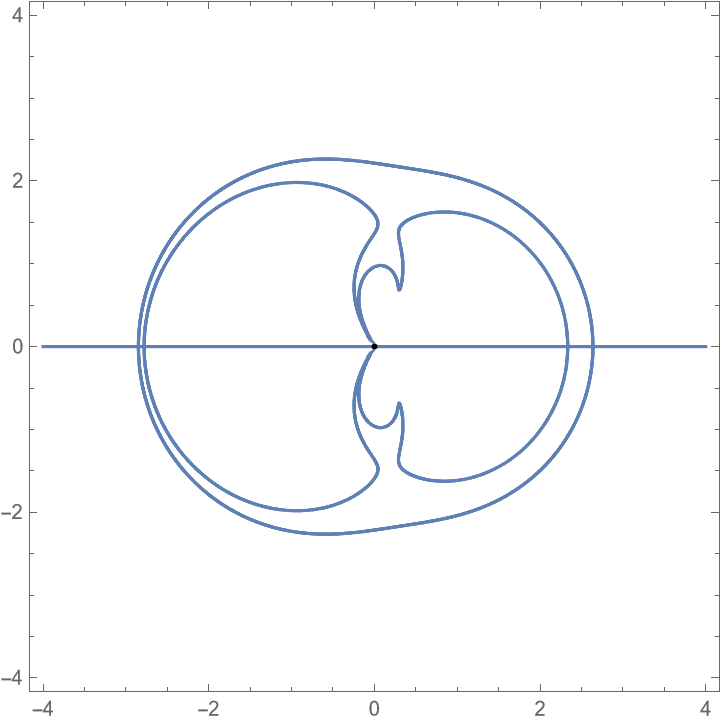}
\caption{$\zeta=40$}
\end{minipage}
\begin{minipage}{0.45\textwidth}
\centering
\includegraphics[width=0.9\textwidth]{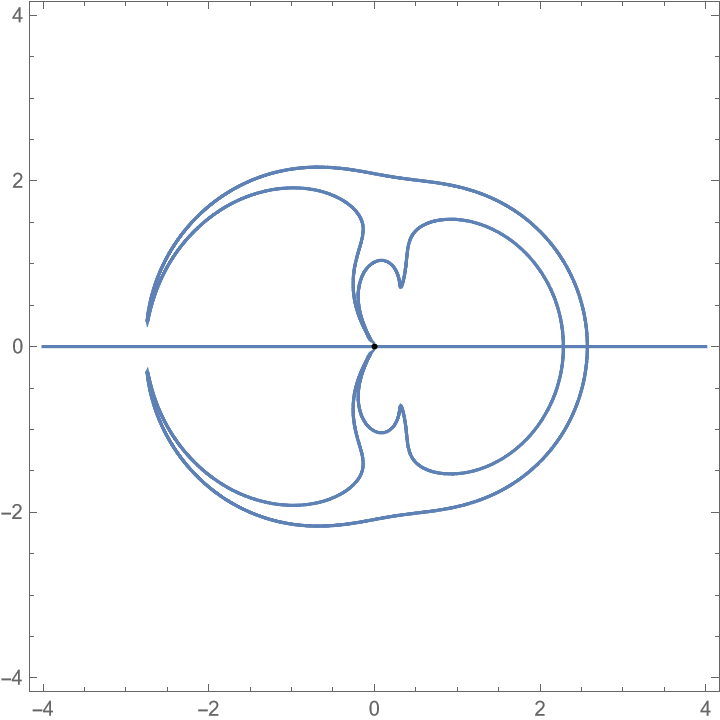}
\caption{$\zeta=10$}
\end{minipage}
\end{figure}

\begin{example}
In this second example we show how if the asymptotic spectrum of $A$ is not real, the condition on $\Gamma$ containing $k$ Jordan curves with the origin in their interior no longer holds. To do this, we consider the 5-diagonal Hessenberg block Toeplitz matrix $A_3$ with symbol
\[ B_3(z) = \begin{pmatrix}
19 z^{-1}+ \zeta & -57 z^{-1}+61-86z \\
-91z^{-2}-65z^{-1}-13 & -9z^{-1}-40
\end{pmatrix}. \]
In the Figures we can see the plot of $\Gamma$ for various values of $\zeta$. For $\zeta=70$ and $\zeta=47$ the asymptotic spectrum of $A$ is included in the real line, while for $\zeta=40$ and $\zeta=10$ it is not. One can see how the plot of $\Gamma$ gets modified and loses the property mentioned in Conjecture \ref{conj:main}.
\end{example}

\medskip

\section{Concluding remarks}
In this paper we formulated a conjecture stating a necessary and sufficient condition for the reality of the asymptotic spectrum of a banded block Toeplitz matrix, and we proved the necessary direction. The first thing left to do would be to fix the proof of Theorem 8 of \cite{ShSt}. We observed that this comes down to studying certain properties of the set $b^{-1}(\real)$, but we haven't been able to do so until now. We believe, however, that once this is done for the Toeplitz case, it should not be a challenge to extend it to the case with $k>1$, since the descriptions of the limiting sets are quite similar and the curves which are at the center of the study also have similarities in their structure..
\medskip

\begin{acknow}
I want to thank my supervisor, Professor B. Shapiro for introducing me to this subject and for his endless patience. There is no doubt that his insight and knowledge have been essential for this work to come to life.
\end{acknow}

\end{document}